\newcommand{\R}{\mathbb{R}}
\newtheorem{remark}{Remark}
\newtheorem{assumption}{Assumption}
\newtheorem{definition}{Definition}
\newtheorem{theorem}{Theorem}
\begin{document}
	
\title[Bifurcations of unstable eigenvalues for Stokes waves]{\bf Bifurcations of unstable eigenvalues for Stokes waves derived from conserved energy}

\author{Sergey Dyachenko}
\address[S. Dyachenko]{Department of Mathematics, State University of New York at Buffalo, Buffalo, New York, USA, 14260}
\email{sergeydy@buffalo.edu}
	
\author{Dmitry E. Pelinovsky}
\address[D. E. Pelinovsky]{Department of Mathematics and Statistics, McMaster University, Hamilton, Ontario, Canada, L8S 4K1}
\email{pelinod@mcmaster.ca}
	
\dedicatory{\textit{In memory of Vladimir Zakharov, our friend and mentor, whose vision will continue to inspire us.}}
	
\begin{abstract}
We address Euler's equations for irrotational gravity waves in an infinitely deep fluid rewritten in conformal variables. Stokes waves are traveling waves with the smooth periodic profile. In agreement with the previous numerical results, we give a rigorous proof that the zero eigenvalue bifurcation in the linearized equations of motion for co-periodic perturbations occurs at each extremal point of the energy function versus the steepness parameter, provided that the wave speed is not extremal at the same steepness. We derive the normal form for the unstable eigenvalues and, assisted with numerical approximation of its coefficients, we show that the new unstable eigenvalues emerge only in the direction of increasing steepness.
\end{abstract}

\maketitle

\section{Introduction}

Ocean swell can be viewed in many cases as a train of almost periodic traveling waves propagating along a fixed direction. Understanding stability properties of periodic wave trains are central to wave forecasting. Such periodic traveling waves were originally found by Stokes~\cite{stokes1847theory,stokes1880theory}, and hence they are often referenced as the Stokes waves. Stokes waves are efficiently approximated in the limit of small amplitude~\cite{levi1925determination}. The existence of Stokes waves including the limiting wave with the peaked profile was proven in~\cite{amick1982stokes,plotnikov2002proof,toland1978existence}. 

The stability of Stokes waves is studied either with respect to perturbations co-periodic with the underlying wave (superharmonic), or in a wider space of perturbations periodic with longer periods (subharmonic). In the latter case, the modulational instability, also known as the Benjamin-Feir instability~\cite{benjamin1967disintegration,zakharov1968stability}, is recovered. The modulational instability of Stokes waves was studied rigorously and proven in~\cite{nguyen2020proof,berti2022full,berti2023,hur2023}.
Moreover, the high-frequency instabilities discovered in~\cite{deconinck2011instability} and theoretically studied in~\cite{creedon2021high1,creedon2021high2} have the same modulational nature.

Whether subharmonic or superharmonic, the stability properties of traveling waves are studied in the limit of small amplitude~\cite{creedon2022ahigh} where the small-amplitude expansions offer accurate approximations. 
For the Stokes waves of high steepness and, the limiting Stokes wave with a $2\pi/3$ crest angle, the series expansion diverges and numerical methods are used instead. Numerical solution of the eigenvalue problem for stability of Stokes waves on a surface of an infinitely deep fluid goes back to ~\cite{tanaka1983stability,tanaka1985,longuet1997crest}. In~\cite{murashige2020stability,korotkevich2022superharmonic} the stability problem is treated as an eigenvalue problem for a large matrix in Fourier basis and is restricted to superharmonic perturbations. Recently, it was realized that the stability spectrum can be determined more efficiently via matrix-free methods~\cite{ dyachenko_semenova2022,dyachenko2023quasiperiodic} allowing to extend the stability analysis to nearly limiting Stokes waves~\cite{dyachenko2022almost}, and include the Bloch-Floquet theory to cover subharmonic  perturbations~\cite{deconinck2022instability,DDS2024}.

Figure \ref{fig:hamiltonian} presents a schematic of the dependence of Hamiltonian $H$ (green) and the speed $c$ (red) of the traveling periodic wave  continued with respect to the steepness 
parameter $s$  \cite{korotkevich2022superharmonic,dyachenko_semenova2022,dyachenko2023quasiperiodic,dyachenko2022almost,deconinck2022instability,DDS2024}, see also ~\cite{tanaka1983stability,tanaka1985,longuet1997crest} for the early numerical results suggesting the same behavior of the Hamiltonian and speed versus the steepness. The family of traveling periodic waves bifurcates from the small-amplitude limit $(H_0,c_0)$ at $s = 0$ and oscillates toward $(H_{lim},c_{lim})$ for the limiting Stokes wave with the peaked profile 
and the limiting steepness $s_{lim}$ \cite{amick1982stokes,plotnikov2002proof,toland1978existence}. The striking point of this figure is 
that every extremal point of the Hamiltonian corresponds to the instability bifurcation in the co-periodic stability problem in the sense of the existence of the zero eigenvalue with a higher algebraic multiplicity than the one prescribed by the symmetries of the water wave equations. When the steepness of the periodic wave is increased past the extremal point of the Hamiltonian, a new pair of real eigenvalues bifurcates in the spectrum of the co-periodic stability problem. It is conjectured \cite{DDS2024} that the family of traveling periodic waves displays infinitely many oscillations with infinitely many 
instability bifurcations before reaching the limiting peaked wave. 

\begin{figure}[htb!]
    \centering
    \includegraphics[width=0.95\linewidth]{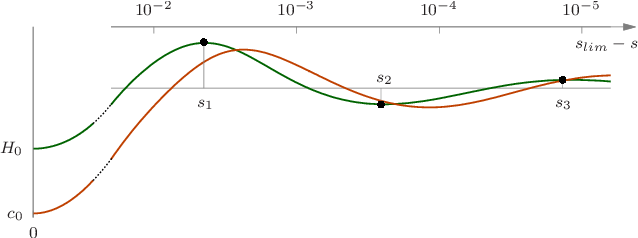}
    \caption{Schematic of oscillations of the Hamiltonian (green) and the speed (red) as the limiting Stokes wave is approached. The figure illustrates $H_{lim}-H$ and $c_{lim}-c$ as a function of $s_{lim}-s$, where $s_{lim}$, $H_{lim}$, and $c_{lim}$ represent the steepness, the Hamiltonian, and the speed of the limiting Stokes wave. The black circles mark the extreme points of the Hamiltonian, where the instability bifurcation occurs.}
    \label{fig:hamiltonian}
\end{figure}

The main purpose of this paper is to give a rigorous proof that the instability bifurcation occurs exactly at each extremal point of the Hamiltonian or, equivalently, the horizontal momentum. If the zero eigenvalue has generally geometric multiplicity two and algebraic multiplicity four due to symmetries of the equations of motion, we show that the zero eigenvalue has geometric multiplicity two and algebraic multiplicity of at least six at the instability bifurcation point. This result is given by Theorem \ref{theorem-crit}. In addition, we compute the normal form for the unstable eigenvalues in the co-periodic stability problem and, assisted with numerical approximation of its coefficients, we show that the new unstable eigenvalues emerge only in the direction of increasing steepness. This result is given by Theorem \ref{theorem-bif}.

For the technical parts of the proofs, we adopt conformal variables for the two-dimensional fluid dynamics developed in~\cite{ovsyannikov1973dynamika,tanveer1991singularities,tanveer1993singularities,babenko1987some,DyachenkoEtAl1996} and used in~\cite{murashige2020stability,korotkevich2022superharmonic,dyachenko_semenova2022,dyachenko2023quasiperiodic} for spectrally accurate numerical approach to the stability problem. The conformal variables allow us to write the problem of finding Stokes wave as a pseudo-differential nonlinear equation~\cite{babenko1987some,locke2024peaked,locke2024smooth}, and formulate the stability problem as a matrix-free pseudo-differential eigenvalue problem with periodic coefficients~\cite{dyachenko_semenova2022,dyachenko2023quasiperiodic}. The conserved quantities of the water wave equations \cite{benjamin1982hamiltonian} are rewritten in conformal variables and impose constraints 
on solutions of the co-periodic stability problem. Computations of the Jordan blocks and Puiseux expansions for multiple eigenvalues are performed in compliance with the constraints, which 
act as the Fredholm solvability conditions for solutions at each order of the perturbation theory. Since 
justification of the Puiseux expansions is fairly known for linear eigenvalue problems \cite{welters2011}, we will focus on actual computations rather than on the justification analysis.

The main result of Theorem \ref{theorem-crit} has been well understood in the dynamics of fluids, based on the numerical results \cite{tanaka1983stability,tanaka1985} and the formal analytical computations \cite{saffman1985,mackay1986stability}. Compared to these earlier works which were based on Zakharov's equations of motion \cite{zakharov1968stability}, we develop the analysis of equations of motion in conformal variables by exploring Babenko's pseudo-differential equation \cite{babenko1987some} and its linearization. We also go beyond the criterion for the instability bifurcation and compute its normal form for the unstable eigenvalues. With the use of much more elaborated numerical computations, we confirm the main pattern that the new unstable eigenvalues arise in the direction of increasing steepness along the family of Stokes waves.

The paper is organized as follows. Equations of motion in physical and conformal variables are written in Section \ref{sec-2}, where we also give the conserved quantities and describe the existence and stability problems for Stokes waves. Section \ref{sec-3} presents the main result on the co-periodic instability bifurcation (Theorem \ref{theorem-crit}). Section \ref{sec-4} presents the normal form for the unstable eigenvalues (Theorem \ref{theorem-bif}). Section \ref{sec-5} contains numerical approximations of eigenfunctions at the instability bifurcation and coefficients of the normal form to confirm the main prediction that every instability bifurcation generates a new unstable eigenvalue in the direction of the increasing steepness. The paper is completed with Section \ref{sec-6} where further questions are discussed.

\section{Equations of motion in conformal variables}
\label{sec-2}

Let $y = \eta(x,t)$ be the profile for the free surface of an incompressible and irrotational deep fluid in the $2\pi$-periodic domain $\mathbb{T}$ and in time $t \in \mathbb{R}$. For a proper definition of the free surface, we add the zero-mean constraint $\oint \eta(x,t) dx = 0$, which is invariant in the time evolution of Euler's equations. 

Let $\varphi(x,y,t)$ be the velocity potential, which satisfies the Laplace equation in the time-dependent spatial domain 
$$
\mathcal{D}(t) := \left\{ (x,y) \in \R^2 : \quad x \in \mathbb{T}, \quad -\infty < y \leq \eta(x,t) \right\}
$$
subject to the periodic boundary conditions on $\mathbb{T}$ and the decay condition as $y \to -\infty$. The Euler's equations are completed by two additional (kinematic and dynamic) conditions 
at the free surface $y = \eta(x,t)$:
\begin{align}
\label{euler}
\left.  \begin{array}{r} \displaystyle \eta_t + \varphi_x \eta_x - \varphi_y = 0, \\ \displaystyle
\varphi_t + \frac{1}{2} (\varphi_x)^2 + \frac{1}{2} (\varphi_y)^2 + \eta = 0, \end{array} \right\} \qquad \mbox{\rm at} \;\; y = \eta(x,t), 
\end{align}  
where the gravity constant $g$ is set to unity for convenience. 

Consider now a holomorphic function $z(u,t) = \xi(u,t) + i\eta(u,t)$, which realizes a conformal mapping of the vertical strip in the lower complex half-plane $u\in \mathbb{T} \times i(-\infty,0]$ to the fluid domain $z(\cdot,t) \in\mathcal{D}(t)$ beneath the free surface. The top boundary $\mbox{Im}\,u = 0$ gives the free surface in parametric form $x = \xi(u,t)$ and $y = \eta(u,t)$ written in variables $u \in \mathbb{T}$ and $t \in \mathbb{R}$ with $\xi = u - \mathcal{H} \eta$, where $\mathcal{H}$ is the periodic Hilbert transform in $L^2(\mathbb{T})$ normalized by the Fourier symbol 
$$
\hat{\mathcal{H}}_n = \left\{ \begin{array}{ll} 
i \; {\rm sgn}(n), \quad & n \in \mathbb{Z} \backslash \{0\}, \\
0, \quad & n = 0. \end{array} \right.
$$  
We also define a positive self-adjoint operator $K = - \mathcal{H} \partial_u$ in $L^2(\mathbb{T})$ with 
the domain $H^1_{\rm per}(\mathbb{T})$ and the Fourier symbol 
$$
\hat{K}_n = |n|, \quad n \in \mathbb{Z}.
$$
It follows from $\xi = u - \mathcal{H} \eta$ that 
$$
\xi_u = 1 + K \eta \quad \mbox{\rm and} \quad \xi_t = -\mathcal{H}\eta_t.
$$
The mean value of $\eta$ in variable $u \in \mathbb{T}$ might be a function of time $t \in \mathbb{R}$ but plays no role in the equations of motion.

By using the constrained Lagrange minimization, see \cite{DyachenkoEtAl1996}
and \cite[Appendix A]{locke2024smooth}, the system of Euler's equations in physical coordinates (\ref{euler}) can be rewritten as the following system of pseudo-differential equations for velocity potential $\psi$ and the free surface $\eta$ defined at the top boundary $\mbox{Im}\,u = 0$:
\begin{equation}
\label{time-Bab-eq}
\left\{ \begin{array}{l}
\eta_t (1 + K \eta)  + \eta_u \mathcal{H} \eta_t + \mathcal{H} \psi_u = 0, \\
\psi_t \eta_u - \psi_u \eta_t + \eta \eta_u + \mathcal{H} \left( (1 + K \eta) \psi_t + 
\psi_u \mathcal{H} \eta_t + \eta  (1 + K \eta) \right) = 0.
\end{array}
\right. 
\end{equation}
The system (\ref{time-Bab-eq}) is the starting point of our work. In the rest of this section, we review the conserved quantities, the traveling wave formulation, the existence problem for traveling waves, and the linear stability problem for traveling waves with respect to co-periodic perturbations. 

\subsection{Conserved quantities}

Taking the mean value of the two equations in system (\ref{time-Bab-eq}) yields the existence of the following two conserved quantities:
\begin{align}
\label{conserved-mass}
M(\eta) &= \oint \eta (1+K \eta) du, \\
\label{conserved-momentum}
P(\psi,\eta) &= -\oint \psi \eta_u du. 
\end{align}
Due to the zero-mean constraint $\oint \eta(x,t) dx = 0$ on the surface elevation $\eta$ and the chain rule $dx = (1 + K \eta) du$, we get the constraint $M(\eta) = 0$, or explicitly
\begin{equation}
\label{zero-mean}
\oint \eta (1+K \eta) du = 0.
\end{equation}
With the constraint $M(\eta) = 0$, another conserved quantity can be derived from the second equation in system (\ref{time-Bab-eq}):
\begin{equation} 
Q(\psi,\eta) = \oint \psi (1 + K \eta) du,
\label{conserved-mass-2}
\end{equation}
which corresponds to the conserved mean value of the potential $\psi$ on the surface in physical variable $x \in \mathbb{T}$ due to the chain rule $dx = (1+K \eta) du$.

The conserved quantities (\ref{conserved-mass}), (\ref{conserved-momentum}), and (\ref{conserved-mass-2}) follow from the general study of symmetries and conserved quantities for Euler's equations in physical variable $x \in \mathbb{R}$ in \cite{benjamin1982hamiltonian}, where $M(\eta)$, $P(\psi,\eta)$, and $Q(\psi,\eta)$ are referred to as {\em mass}, {\em the horizontal momentum}, and {\em the vertical momentum}. The same list of conserved quantities in the conformal variable $u \in \mathbb{T}$ can also be found in \cite{DyachenkoEtAl1996}. The two components of momentum can be expressed in complex form
\begin{align}
Q - iP = \oint \psi z_u \,du, 
\end{align}
and yield the complex conserved momentum, where $z_u = \xi_u + i \eta_u = 1 + K \eta + i \eta_u$.

To derive the energy conservation, we use the zero-mean constraint (\ref{zero-mean}) and the conservation of $Q(\psi,\eta)$ in (\ref{conserved-mass-2}). Applying $\mathcal{H}$ to the second equation of system (\ref{time-Bab-eq}) with $\mathcal{H}^2 = - {\rm Id}$ in the space of $2\pi$-periodic function with zero mean, we obtain 
\begin{equation}
\label{time-Bab-eq-add}
\psi_t (1 + K \eta) + 
\psi_u \mathcal{H} \eta_t + \eta (1 + K \eta) - \mathcal{H} (\psi_t \eta_u - \psi_u \eta_t + \eta \eta_u) = 0.
\end{equation}
Multiplying the first equation of system (\ref{time-Bab-eq}) by $\psi_t$ and equation (\ref{time-Bab-eq-add}) by $\eta_t$, integrating over the period of $\mathbb{T}$, and subtracting one equation from another, we integrate by parts and obtain the conserved {\em energy} (Hamiltonian) in the form:
\begin{equation}
\label{conserved-Ham}
H(\psi,\eta) = \frac{1}{2}\oint \left( \psi K \psi + \eta^2 (1 + K \eta) \right) du. 
\end{equation}
The energy $H(\psi,\eta)$ is the main quantity in the stability analysis of the traveling waves.

\subsection{Formulation of equations of motion in the traveling frame}

Let us write the first equation of system (\ref{time-Bab-eq}) 
and equation (\ref{time-Bab-eq-add}) in the reference frame moving with the wave speed $c$:
\begin{equation*}
\left\{ \begin{array}{l}
\eta_t (1 + K \eta) + \eta_u \mathcal{H} \eta_t + \mathcal{H} \psi_u - c \eta_u = 0, \\
\psi_t (1 + K \eta) + \psi_u \mathcal{H} \eta_t + \eta (1 + K \eta)- c \psi_u - \mathcal{H} (\psi_t \eta_u - \psi_u \eta_t + \eta \eta_u) = 0,
\end{array}
\right. 
\end{equation*}
where $u$ now stands for $u - ct$. Let us introduce the following change of variables by 
\begin{equation}
\label{decomposition}
\psi = -c \mathcal{H} \eta + \zeta,
\end{equation}
after which the equations of motion yield,
\begin{equation*}
\left\{ \begin{array}{l}
\eta_t (1 + K \eta) + \eta_u \mathcal{H} \eta_t + \mathcal{H} \zeta_u  = 0, \\
\zeta_t (1 + K \eta) + \zeta_u \mathcal{H} \eta_t + \eta  (1 + K \eta) - c \zeta_u - c \mathcal{H} \eta_t - c^2 K \eta \\
- \mathcal{H} \left(
\zeta_t \eta_u - \zeta_u \eta_t + \eta \eta_u 
- c  \eta_u \mathcal{H} \eta_t - c \eta_t K \eta \right)  = 0.
\end{array}
\right. 
\end{equation*}
Substituting $\zeta_u = \mathcal{H} (\eta_t (1 + K \eta) + \eta_u \mathcal{H} \eta_t )$ from the first equation to the second equation transforms
the system of evolution equations to the final form:
\begin{align}
\left\{ \begin{array}{l}
\eta_t (1 + K \eta) + \eta_u \mathcal{H} \eta_t = K\zeta, \\
\zeta_t (1 + K \eta) + \zeta_u \mathcal{H} \eta_t - \mathcal{H} (\zeta_t \eta_u - \zeta_u \eta_t ) - 2 c \mathcal{H} \eta_t  = \left(c^2 K - 1\right)\eta  - \eta K \eta - \frac{1}{2} K \eta^2.
\end{array}
\right. 
\label{full-single-eq}
\end{align}
We are now ready to set up the existence and linear stability problems for traveling waves. 

\subsection{Existence of traveling waves}

Traveling waves correspond to the reduction $\zeta = 0$ for the time-independent solutions of system (\ref{full-single-eq}). This gives the scalar pseudo-differential Babenko's equation \cite{babenko1987some} for the profile $\eta = \eta(u)$:
\begin{equation}
\label{trav-Bab-eq}
(c^2 K - 1) \eta  = \frac{1}{2} K \eta^2 + \eta K \eta.
\end{equation}
This equation can be obtained as the Euler--Lagrange equation for the action functional
\begin{equation}
\label{aug-energy}
\Lambda_c(\eta) := \frac{1}{2} \langle (c^2 K - 1) \eta, \eta \rangle - 
\frac{1}{2} \langle K \eta^2, \eta \rangle, \quad \eta \in H^1_{\rm per}(\mathbb{T}),
\end{equation}
where $\langle f, g \rangle := \frac{1}{2\pi} \oint \bar{f}(u) g(u) du$ is a standard normalized inner product in $L^2(\mathbb{T})$. We make the following assumption of existence of traveling waves, based on numerical 
results \cite{korotkevich2022superharmonic} and the small-amplitude expansions \cite{DyachenkoEtAl1996}.

\begin{assumption}
	\label{ass-existence}
There exists a family of smooth traveling waves with the even profile $\eta \in C^{\infty}_{\rm per}(\mathbb{T})$ satisfying the Babenko equation (\ref{trav-Bab-eq}) for $c \in (1,c_*)$ with some $c_* > 1$ such that 
$$
\| \eta \|_{H^1_{\rm per}} \to 0 \quad \mbox{\rm as} \;\; c \to 1.
$$
The point $c = 1$ is the bifurcation point of the $2\pi$-periodic solutions with the even single-lobe profile $\eta \sim a \cos(u) + \mathcal{O}(a^2)$ from the zero solution of the Babenko equation (\ref{trav-Bab-eq}).
\end{assumption}

\begin{remark}
    As suggested in Figure \ref{fig:hamiltonian}, the profile $\eta \in C^{\infty}_{\rm per}(\mathbb{T})$ can be more efficiently parameterized by steepness $s$ rather than speed $c$ and the dependence of speed $c$ versus steepness $s$ becomes oscillatory towards the limiting wave with the peaked profile. The details of this dependence are not important for the stability analysis as long as the zero eigenvalue bifurcation (at the extremal point of energy) is different from the extremal point of speed, see Assumption \ref{ass-kernel}.
\end{remark}

\subsection{Linear stability of traveling waves}

Expanding system (\ref{full-single-eq}) for $(\eta,\zeta)$ near the traveling wave with the profile $(\eta,0)$ and truncating the system at the linear terms with respect to the co-periodic perturbation $(v,w)$, we obtain the linearized equations of motion (also derived in \cite{dyachenko_semenova2022}):
\begin{equation}
\left\{ \begin{array}{ll}
\mathcal{M} v_t \qquad \qquad & = K w, \\
\mathcal{M}^* w_t - 2 c \mathcal{H} v_t & = \mathcal{L} v,
\end{array}
\right. 
\label{lin-Bab-eq}
\end{equation}
where 
\begin{align*}
\mathcal{M} := 1 + K \eta + \eta' \mathcal{H} \quad\mbox{and}\quad
\mathcal{M}^* := 1 + K \eta - \mathcal{H} (\eta' \; \cdot ), 
\end{align*}
and
\begin{align}
\mathcal{L} := c^2 K - (1 + K \eta) - \eta K - K(\eta \; \cdot ). \label{linBop}
\end{align}
We note that $\mathcal{M}^*$ is the adjoint operator to a bounded operator $\mathcal{M}$ in $L^2(\mathbb{T})$ with respect to $\langle \cdot, \cdot \rangle$ and that $\mathcal{L}$ is a self-adjoint unbounded operator in $L^2(\mathbb{T})$ with ${\rm Dom}(\mathcal{L}) = H^1_{\rm per}(\mathbb{T})$. Furthermore, 
$\mathcal{L}$ is the linearized operator of the Babenko equation (\ref{trav-Bab-eq}). 
Also recall that $K$ is a self-adjoint unbounded operator in $L^2(\mathbb{T})$ with ${\rm Dom}(K) = H^1_{\rm per}(\mathbb{T})$. It is clear from the Fourier series that $K 1 = 0$, and ${\rm Ker}(K) = {\rm span}(1)$.

\begin{remark}
It follows from the translational symmetry of the Babenko equation (\ref{trav-Bab-eq}) that 
$\mathcal{L} \eta' = 0$ with $\eta' \in H^1_{\rm per}(\mathbb{T})$ if $\eta \in C^{\infty}_{\rm per}(\mathbb{T})$ is smooth. We also note that 
\begin{equation}
\label{L-on-1}
\mathcal{L} 1 = -\left(1 + 2K\eta\right),
\end{equation} 
which is useful in our computations.
\end{remark}

Separating variables in the linearized system (\ref{lin-Bab-eq}) yields the spectral stability problem with respect to co-periodic perturbations, 
\begin{equation}
\left\{ \begin{array}{ll}
Kw &= \lambda \mathcal{M} v, \\
\mathcal{L} v &= \lambda (\mathcal{M}^* w - 2 c \mathcal{H} v),
\end{array}
\right. 
\label{spec-Bab-eq-aux}
\end{equation}
where $(v,w) \in H^1_{\rm per}(\mathbb{T}) \times H^1_{\rm per}(\mathbb{T})$ is an eigenfunction and $\lambda \in \mathbb{C}$ is an eigenvalue. Since $K$ and $\mathcal{L}$ are unbounded operators and $\mathbb{T}$ is compact, the spectrum of the spectral problem (\ref{spec-Bab-eq-aux}) consists of eigenvalues of finite algebraic multiplicity. 

\begin{remark}
There exist two linearly independent eigenfunctions in the kernel of the spectral stability problem (\ref{spec-Bab-eq-aux}) due to the following two symmetries of the underlying physical system. A spatial translation of the Stokes wave results in another solution of the Babenko equation (\ref{trav-Bab-eq}), and is associated with a one-dimensional subspace spanned by the eigenfunction $(v,w) = (\eta',0)$. Similarly, the fluid potential admits gauge transformation $\psi(u,t) \to \psi(u,t) + \psi_0(t)$ for any function $\psi_0(t)$. This property is associated with a one-dimensional subspace spanned by the eigenfunction $(v,w)=(0,1)$. 
\end{remark}

\section{Criterion for instability bifurcation}
\label{sec-3}

We rewrite the spectral stability problem (\ref{spec-Bab-eq-aux}) in the matrix form 
\begin{equation}
\label{spec-Bab-eq}
\left( \begin{matrix} 0 & K \\ \mathcal{L} & 0 \end{matrix} \right) 
\left( \begin{matrix} v \\ w \end{matrix} \right) = \lambda 
\left( \begin{matrix} \mathcal{M} & 0 \\ -2 c \mathcal{H} & \mathcal{M}^* \end{matrix} \right) 
\left( \begin{matrix} v \\ w \end{matrix} \right),
\end{equation}
which is rewritten as the generalized eigenvalue problem of the form $A \vec{x} = \lambda B \vec{x}$ 
with 
\begin{align*}
A & : H^1_{\rm per}(\mathbb{T}) \times H^1_{\rm per}(\mathbb{T}) \to L^2(\mathbb{T}) \times L^2(\mathbb{T}), \\ 
B &: L^2(\mathbb{T}) \times L^2(\mathbb{T}) \to L^2(\mathbb{T}) \times L^2(\mathbb{T}), 
\end{align*}
given by 
$$
A = \left( \begin{matrix} 0 & K \\ \mathcal{L} & 0 \end{matrix} \right), \qquad 
B = \left( \begin{matrix} \mathcal{M} & 0 \\ -2 c \mathcal{H} & \mathcal{M}^* \end{matrix} \right),
$$
and $\vec{x} = (v,w) \in H^1_{\rm per}(\mathbb{T}) \times H^1_{\rm per}(\mathbb{T})$.
The geometric multiplicity of $\lambda = 0$ is defined by the dimension of ${\rm Ker}(A)$. The algebraic multiplicity of $\lambda = 0$ is defined by the length of the Jordan chain of generalized eigenvectors 
\begin{align*}
    A \vec{x}_0 &= 0, \\
    A \vec{x}_1 &= B \vec{x}_0, \\
    A \vec{x}_2 &= B \vec{x}_1, 
\end{align*}
with $\vec{x}_0, \vec{x}_1, \vec{x}_2, \dots \in H^1_{\rm per}(\mathbb{T}) \times H^1_{\rm per}(\mathbb{T})$. 
In what follows, we compute the Jordan chain for the particular operators $A$ and $B$ in (\ref{spec-Bab-eq}).

\begin{remark}
    The bounded operator $M : L^2(\mathbb{T}) \to L^2(\mathbb{T})$ is invertible with the explicit formula for the inverse operator, see \cite[Eq. (13)]{dyachenko_semenova2022}. Hence, the bounded operator 
    $B : L^2(\mathbb{T}) \times L^2(\mathbb{T}) \to L^2(\mathbb{T}) \times L^2(\mathbb{T})$ is also invertible so that the generalized eigenvalue problem $A \vec{x} = \lambda B \vec{x}$ can be rewritten as the linear eigenvalue problem $B^{-1} A \vec{x} = \lambda \vec{x}$.
\end{remark}

Since $K 1 = 0$ and $\mathcal{L} \eta' = 0$, the 
null space of the unbounded operator 
$A : H^1_{\rm per}(\mathbb{T}) \times H^1_{\rm per}(\mathbb{T}) \to L^2(\mathbb{T}) \times L^2(\mathbb{T})$
is at least two-dimensional with 
\begin{equation}
\label{kernel}
\left( \begin{array}{c} v \\ w \end{array} \right) = a_1 \left( \begin{array}{c} \eta' \\ 0 \end{array} \right) + a_2 \left( \begin{array}{c} 0 \\ 1 \end{array} \right),
\end{equation}
where $(a_1,a_2) \in \mathbb{R}^2$. Due to the Hamiltonian symmetry, 
the generalized null space of the spectral stability problem (\ref{spec-Bab-eq}) is at least four-dimensional with at least two generalized eigenfunctions, see (\ref{first-chain}) below.

\begin{definition}
We say that the periodic wave with the profile $\eta \in C^{\infty}_{\rm per}(\mathbb{T})$ is at the stability threshold if the generalized null space of the spectral stability problem (\ref{spec-Bab-eq}) has algebraic multiplicity exceeding four. 
\label{def-bifurcation}
\end{definition}

There are two possibility to hit the stability threshold of Definition \ref{def-bifurcation}: either the null space of $A$ becomes at least three-dimensional or the null space of $A$ remains two-dimensional but the generalized null space of $B^{-1} A$ becomes six-dimensional. Since ${\rm Ker}(K) = {\rm span}(1)$, the first possibility could only be realized if $\mathcal{L}$ has a double zero eigenvalue, see \cite{dyachenko_semenova2022,deconinck2022instability}. This corresponds to the fold point in the dependence of speed $c$ versus steepness $s$, see the red curve in Figure \ref{fig:hamiltonian}, since the family of 
solutions of the Babenko equation (\ref{trav-Bab-eq}) fails to continue in $c$ at the extremal values 
of the dependence of $c$ versus steepness $s$. Therefore,
we eliminate the first possibility according to the following assumption and restrict our attention to the second possibility.

\begin{assumption} 
	\label{ass-kernel}
${\rm Ker}(\mathcal{L}) = {\rm span}(\eta')$, that is, the value of $c$ is not a fold point.
\end{assumption}

Due to Assumption \ref{ass-kernel}, the mapping $c \mapsto \eta \in C^{\infty}_{\rm per}(\mathbb{T})$ is smooth so that we can differentiate the Babenko equation (\ref{trav-Bab-eq}) in $c$ and obtain 
\begin{equation}
\label{der-speed}
\mathcal{L} \partial_c \eta + 2 c K \eta = 0, \quad \Rightarrow \quad \partial_c \eta = -2 c \mathcal{L}^{-1} K \eta,
\end{equation}
where $\partial_c \eta \in C^{\infty}_{\rm per}(\mathbb{T})$ and $\mathcal{L}^{-1}$ is uniquely defined on the subspace of even functions in $L^2(\mathbb{T})$ since ${\rm Ker}(\mathcal{L}) = {\rm span}(\eta')$ is spanned by the odd function, see Assumption \ref{ass-existence}. Related to the profile $\eta \in C^{\infty}_{\rm per}(\mathbb{T})$ of the traveling wave, we define the wave action by $\mathcal{E}(c) := \Lambda_c(\eta)$, where $\Lambda_c(\eta)$ is given by (\ref{aug-energy}) and the wave momentum and energy by 
\begin{equation}
\label{wave-momentum}
\mathcal{P}(c) := P(\psi  = -c \mathcal{H} \eta,\eta) = c \langle K \eta, \eta \rangle
\end{equation}
and 
\begin{equation}
\label{wave-energy}
\mathcal{H}(c) := H(\psi= -c \mathcal{H} \eta, \eta) = \frac{c^2}{2} \langle K \eta, \eta \rangle + \frac{1}{2}\langle \eta^2, (1+K\eta) \rangle,
\end{equation}
where $P(\psi,\eta)$ and $H(\psi,\eta)$ are given by (\ref{conserved-momentum}) and (\ref{conserved-Ham}). 
The following theorem presents the main result on the criterion for instability bifurcation. 

\begin{theorem}
	\label{theorem-crit}
Under Assumptions \ref{ass-existence} and \ref{ass-kernel}, the generalized null space of the spectral problem (\ref{spec-Bab-eq}) is at least six-dimensional if and only if $\mathcal{P}'(c) = 0$ or, equivalently, $\mathcal{H}'(c) = 0$.
\end{theorem}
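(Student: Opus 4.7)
The plan is to build the Jordan chain at $\lambda=0$ for $A\vec{x}=\lambda B\vec{x}$, identify the obstruction to extending beyond the symmetry-generated four-dimensional generalized null space, and invoke the Hamiltonian parity of that eigenspace. Since $K$ and $\mathcal{L}$ are self-adjoint with $\ker K=\mathrm{span}(1)$ and $\ker\mathcal{L}=\mathrm{span}(\eta')$ (by Assumption~\ref{ass-kernel}), one has $\ker A=\mathrm{span}\{(\eta',0),(0,1)\}$ and $\ker A^{*}=\mathrm{span}\{(1,0),(0,\eta')\}$, so Fredholm solvability at each level is governed by a $2\times2$ pairing.

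I would first construct the level-two vectors $\vec{x}_1^{(1)}$ and $\vec{x}_1^{(2)}$ explicitly. Two identities drive the calculation: $\mathcal{M}\eta'=\eta'$ (since $\mathcal{H}\eta'=-K\eta$ makes the cross terms in $(1+K\eta)\eta'+\eta'\mathcal{H}\eta'$ cancel) and $K\mathcal{H}\eta=\eta'$ (from $\mathcal{H}^2=-I$ on zero-mean functions). Combined with (\ref{der-speed}) and (\ref{L-on-1}), these yield $\vec{x}_1^{(1)}=(-\partial_c\eta,\mathcal{H}\eta)$ and $\vec{x}_1^{(2)}=(-1,0)$ modulo $\ker A$. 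The level-two Fredholm conditions hold automatically by parity ($\eta$ even, $\eta'$ odd) together with the identity $\langle 1+2K\eta,\partial_c\eta\rangle=0$, obtained by differentiating the zero-mean constraint $\oint\eta(1+K\eta)\,du=0$ in $c$.

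The heart of the argument is the level-three obstruction matrix $M_3^{ij}=\langle\vec{\phi}_i,B\vec{x}_1^{(j)}\rangle$ with $\vec{\phi}_1=(1,0)$, $\vec{\phi}_2=(0,\eta')$. The diagonal entries vanish by the same parity and zero-mean observations, while the off-diagonal entries evaluate to $M_3^{12}=\langle 1,-(1+K\eta)\rangle=-1$ and, after using $\mathcal{M}\eta'=\eta'$ to rewrite $\langle\eta',\mathcal{M}^{*}\mathcal{H}\eta\rangle=\langle\eta',\mathcal{H}\eta\rangle=\langle K\eta,\eta\rangle$, $M_3^{21}=2c\langle K\eta,\partial_c\eta\rangle+\langle K\eta,\eta\rangle=\mathcal{P}'(c)$. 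Thus
\[ M_3=\begin{pmatrix}0 & -1\\ \mathcal{P}'(c) & 0\end{pmatrix}, \]
so a third-level extension exists if and only if $\mathcal{P}'(c)=0$, in which case the dimension of the generalized null space is at least five.

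To upgrade five to six, I would invoke the Hamiltonian structure: with $J=\bigl(\begin{smallmatrix}0&1\\-1&0\end{smallmatrix}\bigr)$, $JA$ is self-adjoint and $JB$ is antisymmetric and invertible, so $B^{-1}A=(JB)^{-1}(JA)$ is Hamiltonian with symplectic form $\omega(\vec{x},\vec{y}):=\langle JB\vec{x},\vec{y}\rangle$. By symplectic orthogonality of distinct generalized eigenspaces, $\omega$ restricts nondegenerately to the finite-dimensional generalized zero eigenspace, forcing its dimension to be even and hence $\geq 6$. An equivalent concrete check, which also proves the other direction, is that the $4\times 4$ Gram matrix of $\omega$ in the basis $(\vec{x}_0^{(1)},\vec{x}_1^{(1)},\vec{x}_0^{(2)},\vec{x}_1^{(2)})$ has determinant $(\mathcal{P}'(c))^2$, vanishing exactly when $\mathcal{P}'(c)=0$. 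Finally, the equivalence $\mathcal{P}'(c)=0\Leftrightarrow\mathcal{H}'(c)=0$ follows from the identity $\mathcal{H}'(c)=c\,\mathcal{P}'(c)$, which I would derive by reducing $\mathcal{H}(c)$ to $\langle\eta,\eta\rangle+\tfrac{5}{4}\langle\eta^2,K\eta\rangle$ via the Babenko identity $c^2\langle K\eta,\eta\rangle=\langle\eta,\eta\rangle+\tfrac{3}{2}\langle\eta^2,K\eta\rangle$ (obtained by pairing (\ref{trav-Bab-eq}) with $\eta$), differentiating, and using the $c$-derivative of that same identity. The main obstacle will be justifying the symplectic orthogonality in the infinite-dimensional pseudo-differential setting; if it is awkward one can bypass parity by extending the chain explicitly to length four and verifying that both further Fredholm conditions vanish when $\mathcal{P}'(c)=0$, consistent with a $(4,2)$ Jordan structure.
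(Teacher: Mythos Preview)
Your proposal is correct and, through level two of the Jordan chain and the computation of the $2\times2$ obstruction matrix $M_3$, it coincides with the paper's argument essentially line by line. The two places where you diverge are the passage from dimension five to six and the identity $\mathcal{H}'(c)=c\,\mathcal{P}'(c)$.

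For the jump to six, the paper does not invoke the symplectic parity of the generalized kernel. Instead it observes that the unique solution $(\tilde v_2,\tilde w_2)$ of (\ref{gen-Bab-eq-2-red}) under the constraints (\ref{orth-2-red}) has $\tilde v_2$ odd and $\tilde w_2$ even (because $\eta$ is even and $K$, $\mathcal{M}$, $\mathcal{L}$ preserve parity); the right-hand sides of the next level (\ref{gen-Bab-eq-3}) then automatically satisfy the Fredholm conditions by parity, producing $(\tilde v_3,\tilde w_3)$ explicitly. This is exactly the fallback you mention at the end, and it sidesteps the infinite-dimensional symplectic-orthogonality issue you flagged. Your Hamiltonian route is more structural and would generalize to situations without the even/odd symmetry, but here the paper's parity argument is shorter and entirely elementary.

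For $\mathcal{H}'(c)=c\,\mathcal{P}'(c)$, the paper uses the action $\mathcal{E}(c)=\Lambda_c(\eta)$: since $\eta$ is a critical point of $\Lambda_c$, the chain rule gives $\mathcal{E}'(c)=\partial_c\Lambda_c|_\eta=c\langle K\eta,\eta\rangle=\mathcal{P}(c)$, and combining this with $\mathcal{E}(c)=c\,\mathcal{P}(c)-\mathcal{H}(c)$ yields the result in two lines. Your direct route via the Babenko identity $c^2\langle K\eta,\eta\rangle=\langle\eta,\eta\rangle+\tfrac{3}{2}\langle\eta^2,K\eta\rangle$ works too, but it requires one further ingredient you did not name: pairing (\ref{trav-Bab-eq}) with $\partial_c\eta$ (not just its $c$-derivative paired with $\eta$) to eliminate the cross terms. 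The variational argument is cleaner and is what the paper does.
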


\begin{proof}
	Due to Assumption \ref{ass-kernel}, the null space of the spectral problem (\ref{spec-Bab-eq}) is spanned by (\ref{kernel}). The first element of the Jordan chain is defined by the periodic solutions 
	$(v_1,w_1) \in H^1_{\rm per}(\mathbb{T}) \times H^1_{\rm per}(\mathbb{T})$ of the linear inhomogeneous equations:
	\begin{equation}
	\left\{ \begin{array}{ll}
	K w_1 &=  a_1 \mathcal{M} \eta', \\
	\mathcal{L} v_1 &= - 2 c a_1 \mathcal{H} \eta' + a_2 \mathcal{M}^* 1.
	\end{array}
	\right. 
	\label{gen-Bab-eq}
	\end{equation}	
	Since $\mathcal{M} \eta' = \eta'$ and $\langle 1, \eta' \rangle = 0$, 
	there exists $w_1 \in {\rm Dom}(K)$ from the first equation in the system (\ref{gen-Bab-eq}). For unique definition of $w_1$, we take projection of $w_1$ to $1$ to be zero, after which we get $w_1  = a_1 \mathcal{H} \eta$. By Assumption \ref{ass-existence}, $\eta$ is even, which implies that 
    $\mathcal{H} \eta$ and $\eta'$ are odd. Hence, $\eta'$ has opposite parity compared to $\mathcal{H} \eta'$ and $\mathcal{M}^* 1 = 1 + 2 K \eta$ so that there exists $v_1 \in {\rm Dom}(\mathcal{L})$ from the second equation in the system (\ref{gen-Bab-eq}). For unique definition of $v_1$, we take projection of $v_1$ to $\eta'$ to be zero. By using (\ref{L-on-1}) and (\ref{der-speed}), we get the explicit solutions 
	\begin{align}
	\label{first-chain}
	v_1 = -a_1 \partial_c \eta - a_2, \quad w_1 = a_1 \mathcal{H}\eta,
	\end{align}
where $\partial_c \eta$ is even and $\mathcal{H} \eta$ is odd. 

The second element of the Jordan chain is defined by the periodic solutions 
$(v_2,w_2) \in H^1_{\rm per}(\mathbb{T}) \times H^1_{\rm per}(\mathbb{T})$ of
the linear inhomogeneous equations:
	\begin{equation}
\left\{ \begin{array}{ll}
K w_2 &=  \mathcal{M} v_1  , \\
\mathcal{L} v_2 &= - 2 c \mathcal{H} v_1 + \mathcal{M}^* w_1
\end{array}
\right. 
\label{gen-Bab-eq-2-used}
\end{equation}
which is written explicitly as 	
	\begin{equation}
	\left\{ \begin{array}{ll}
	K w_2 &=  - a_1 \mathcal{M} \partial_c \eta - a_2 \mathcal{M} 1, \\
	\mathcal{L} v_2 &= 2 c a_1 \mathcal{H}  \partial_c \eta + a_1 \mathcal{M}^* \mathcal{H} \eta.
	\end{array}
	\right. 
	\label{gen-Bab-eq-2}
	\end{equation}
Since 
\begin{align*}
\langle 1, \mathcal{M} f \rangle &= \langle \mathcal{M}^* 1, f \rangle = \langle (1+2K \eta), f \rangle, \\
\langle \eta', \mathcal{M}^* f \rangle &= \langle \mathcal{M} \eta', f \rangle = \langle \eta', f \rangle, \\
\langle \eta', \mathcal{H} f \rangle &= -\langle \mathcal{H} \eta', f \rangle = \langle K \eta, f \rangle, 
\end{align*}
Fredholm theorem implies that there exist periodic solutions of the linear inhomogeneous system (\ref{gen-Bab-eq-2}) if and only if the following linear homogeneous system on $(a_1,a_2)$ admits a nonzero solution:
\begin{equation}
\left( \begin{matrix} - \langle (1+2K \eta), \partial_c \eta \rangle & 
-\langle (1+2K \eta), 1 \rangle \\
\langle \eta', \mathcal{H} \eta \rangle + 2c \langle K\eta, \partial_c \eta \rangle & 0
\end{matrix}  
\right) \left( \begin{matrix} a_1 \\ a_2 \end{matrix} \right) = \left( \begin{matrix} 0 \\ 0 \end{matrix} \right).
\label{lin-alg}
\end{equation} 
Taking derivative of the constraint (\ref{zero-mean}) with respect to $c$ yields 
\begin{equation*}
\langle (1+2K\eta), \partial_c \eta \rangle = 0.
\end{equation*}
On the other hand, since $K$ is self-adjoint and $K 1 = 0$, we have 
\begin{align*}
\langle  1,  K \eta \rangle = \langle K1, \eta \rangle = 0.
\end{align*}
Hence, the linear system (\ref{lin-alg}) can be rewritten in the equivalent form as
\begin{equation*}
\left( \begin{matrix} 0 & -1 \\
\langle \eta, K\eta \rangle + 2c \langle K\eta, \partial_c \eta \rangle & 
0
\end{matrix}  
\right) \left( \begin{matrix} a_1 \\ a_2 \end{matrix} \right) = \left( \begin{matrix} 0 \\ 0 \end{matrix} \right).
\end{equation*} 
Thus, $a_2 = 0$, whereas $a_1 \neq 0$ if and only if 
\begin{equation}
\label{det-eq}
\mathcal{D}(c) := \langle K \eta, \eta \rangle + 2c \langle K\eta, \partial_c \eta \rangle 
= \frac{d}{dc} c  \langle K \eta, \eta \rangle = \mathcal{P}'(c) = 0.
\end{equation}
In the case of $\mathcal{P}'(c) = 0$, the second element of the Jordan chain is represented by the only periodic solution of the linear system (\ref{gen-Bab-eq-2}) in the form $(v_2,w_2) = a_1 (\tilde{v}_2,\tilde{w}_2)$, 
where $(\tilde{v}_2,\tilde{w}_2) \in  H^1_{\rm per}(\mathbb{T}) \times H^1_{\rm per}(\mathbb{T})$ are uniquely defined from solutions of the linear inhomogeneous equations
\begin{equation}
\left\{ \begin{array}{ll}
K \tilde{w}_2 &=  -\mathcal{M} \partial_c \eta, \\
\mathcal{L} \tilde{v}_2 &= 2c \mathcal{H} \partial_c \eta + \mathcal{M}^* \mathcal{H} \eta - \frac{\mathcal{P}'(c)}{\| \eta' \|^2} \eta',
\end{array}
\right. 
\label{gen-Bab-eq-2-red}
\end{equation}
subject to the orthogonality conditions 
\begin{equation}
    \label{orth-2-red}
\langle 1, \tilde{w}_2 \rangle = 0 \quad \mbox{\rm and} \quad \langle \eta', \tilde{v}_2 \rangle = 0.
\end{equation}

\begin{remark}
    We have added the orthogonal projection to the second equation of system (\ref{gen-Bab-eq-2-red}) even though $\mathcal{P}'(c) = 0$. This is useful for numerical approximations as well as for the derivation of the normal form in Theorem \ref{theorem-bif}.
\end{remark}

To prove that the generalized null space of the spectral problem (\ref{spec-Bab-eq}) is at least six-dimensional, we consider the third element of the Jordan chain defined by the periodic solutions $(v_3,w_3) \in  H^1_{\rm per}(\mathbb{T}) \times H^1_{\rm per}(\mathbb{T})$ of the linear inhomogeneous equations 
\begin{equation}
\left\{ \begin{array}{ll}
K w_3 &=  a_1 \mathcal{M} \tilde{v}_2, \\
\mathcal{L} v_3 &= - 2 c a_1 \mathcal{H} \tilde{v}_2 + a_1 \mathcal{M}^* \tilde{w}_2.
\end{array}
\right. 
\label{gen-Bab-eq-3}
\end{equation}	
Since $\eta$ is even and operators $K$, $\mathcal{M}$ and $\mathcal{L}$ are parity preserving, we obtain from (\ref{gen-Bab-eq-2-red}) and the orthogonality conditions (\ref{orth-2-red}) that $\tilde{w}_2$ is even and $\tilde{v}_2$ is odd. Hence, odd $\mathcal{M} \tilde{v}_2$ is orthogonal to even $1$ and even $- 2 c \mathcal{H} \tilde{v}_2 + \mathcal{M}^* \tilde{w}_2$ is orthogonal to odd $\eta'$. By Fredholm's theorem, the third element of the Jordan chain is represented by the only periodic solution of the linear system (\ref{gen-Bab-eq-3}) in the form $(v_3,w_3) = a_1 (\tilde{v}_3,\tilde{w}_3)$, 
where $(\tilde{v}_3,\tilde{w}_3) \in  H^1_{\rm per}(\mathbb{T}) \times H^1_{\rm per}(\mathbb{T})$ are uniquely defined from solutions of the linear inhomogeneous equations
\begin{equation}
\left\{ \begin{array}{ll}
K \tilde{w}_3 &=  \mathcal{M} \tilde{v}_2, \\
\mathcal{L} \tilde{v}_3 &= - 2 c \mathcal{H} \tilde{v}_2 + \mathcal{M}^* \tilde{w}_2.
\end{array}
\right. 
\label{gen-Bab-eq-3-red}
\end{equation}
subject to the orthogonality conditions 
\begin{equation}
    \label{orth-3-red}
\langle 1, \tilde{w}_3 \rangle = 0 \quad \mbox{\rm and} \quad \langle \eta', \tilde{v}_3 \rangle = 0.
\end{equation}
From the same parity argument and the orthogonality conditions (\ref{orth-3-red}), we conclude that $\tilde{w}_3 \in H^1_{\rm per}(\mathbb{T})$ is odd and $\tilde{v}_3 \in H^1_{\rm per}(\mathbb{T})$ is even. Thus, the generalized null space of the spectral problem (\ref{spec-Bab-eq}) is at least six-dimensional if and only if $\mathcal{P}'(c) = 0$. 

It remains to show that the critical points of $\mathcal{P}(c)$ coincide with the critical points of the energy  $\mathcal{H}(c)$. Differentiating the action $\mathcal{E}(c) := \Lambda_c(\eta)$ given by (\ref{aug-energy}) in $c$ yields 
\begin{equation}
    \label{aug-energy-der}
	\mathcal{E}'(c) = c \langle K \eta, \eta \rangle + \langle (c^2 K \eta - \eta - \frac{1}{2} K \eta^2 - \eta K \eta), \partial_c \eta \rangle = \mathcal{P}(c), 
\end{equation}
where the quantity in the brackets vanishes due to the Babenko equation (\ref{trav-Bab-eq}). 
Since $\mathcal{E}(c) = c \mathcal{P}(c) - \mathcal{H}(c)$, we obtain 
\begin{align*}
\mathcal{E}'(c) &= \mathcal{P}(c) + c \mathcal{P}'(c) - \mathcal{H}'(c),
\end{align*}
which yields $\mathcal{H}'(c)= c \mathcal{P}'(c)$.
\end{proof}

\begin{remark}
	The two orthogonality conditions used in the proof of Theorem \ref{theorem-crit} can be stated for every eigenfunction $(v,w) \in H^1_{\rm per}(\mathbb{T}) \times H^1_{\rm per}(\mathbb{T})$ of the spectral problem (\ref{spec-Bab-eq}) with $\lambda \neq 0$. Indeed, the two Fredholm constraints 
	\begin{align*}
	0 &= \langle 1, K w \rangle = \lambda \langle 1, \mathcal{M}v \rangle = \lambda \langle \mathcal{M}^* 1, v \rangle, \\
	0 &= \langle \eta', \mathcal{L} v \rangle = \lambda \left( \langle \eta', \mathcal{M}^* w \rangle - 2 c \langle \eta', \mathcal{H} v \rangle \right) 
	= \lambda \left( \langle \mathcal{M} \eta', w \rangle + 2 c \langle \mathcal{H} \eta', v \rangle \right)
	\end{align*}
imply
\begin{equation}
\label{orth-conditions}
	\langle (1+ 2K \eta), v \rangle = 0, \qquad 
	\langle \eta', w \rangle - 2 c \langle K \eta, v \rangle = 0.
\end{equation}
The first orthogonality condition in (\ref{orth-conditions}) 
is a linearization of the constraint (\ref{zero-mean}). 
The second orthogonality condition in (\ref{orth-conditions}) 
is a linearization of the momentum conservation $P(\psi,\eta)$ with the decomposition (\ref{decomposition}):
$$
P(\psi,\eta) = c \langle K \eta, \eta \rangle - \langle \eta_u, \zeta \rangle,
$$
since $(v,w)$ is the perturbation of the traveling wave with the profile $(\eta,0)$ in variables $(\eta,\zeta)$.
\end{remark}

\begin{remark}
    It follows from the proof of Theorem \ref{theorem-crit} that the Jordan canonical form 
    of $B^{-1} A$ at $\lambda = 0$ if $\mathcal{P}'(c) \neq 0$ 
    is 
    $$
    \left( 
    \begin{matrix}
    0 & 1 & 0 & 0\\ 0 & 0 & 0 & 0 \\ 0 & 0 & 0 & 1 \\ 0 & 0 & 0 & 0
    \end{matrix}
    \right). 
    $$
    If $\mathcal{P}'(c) = 0$ and $\mathcal{B} \neq 0$, see (\ref{def-D}) below, the Jordan canonical form     of $B^{-1} A$ for $\lambda = 0$ is 
        $$
    \left( 
    \begin{matrix}
    0 & 1 & 0 & 0 & 0 & 0\\ 0 & 0 & 1 & 0 & 0 & 0 \\ 0 & 0 & 0 & 1 & 0 & 0 \\ 0 & 0 & 0 & 0 & 0 & 0 \\
   0 & 0 & 0 & 0 & 0 & 1 \\    0 & 0 & 0 & 0 & 0 & 0
    \end{matrix}
    \right).
    $$
\end{remark}

\section{Normal form for unstable eigenvalues}
\label{sec-4}

We derive the normal form for the splitting of the multiple zero eigenvalue 
of the spectral problem (\ref{spec-Bab-eq}) for the values of $c$ close to a critical point of $\mathcal{P}(c)$ in Theorem \ref{theorem-crit}. The following theorem gives the main result.

\begin{theorem}
	\label{theorem-bif}
Under Assumptions \ref{ass-existence} and \ref{ass-kernel}, let $c_0 > 1$ be the extremal point of $\mathcal{P}(c)$ such that $\mathcal{P}'(c_0) = 0$ and $\mathcal{P}''(c_0) \neq 0$. Then, there is $\epsilon_0 > 0$ such that for every $c \in (c_0,c_0+\epsilon_0)$, the spectral stability problem (\ref{spec-Bab-eq}) admits two (small) real eigenvalues $\pm \lambda(c)$ with $\lambda(c) > 0$ near $0$ 
if $\mathcal{B} \mathcal{P}''(c_0) < 0$ and two (small) purely imaginary eigenvalues 
$\pm i \omega(c)$ with $\omega(c) > 0$ near $0$ if 	$\mathcal{B} \mathcal{P}''(c_0) > 0$, where 
	\begin{equation}
	\label{def-D}
\mathcal{B} := \langle \eta', \tilde{w}_3 \rangle - 2 c \langle K \eta, \tilde{v}_3 \rangle
	\end{equation} 
	is defined from solutions of (\ref{gen-Bab-eq-3-red}) computed from solutions of (\ref{gen-Bab-eq-2-red}). The real and purely imaginary eigenvalues are exchanged to the opposite if $c \in (c_0-\epsilon_0,c_0)$.
\end{theorem}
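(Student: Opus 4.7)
The strategy I would adopt is to extend the Jordan-chain construction from the proof of Theorem \ref{theorem-crit} into a joint expansion in the small eigenvalue $\lambda$ and the small parameter $\epsilon := c - c_0$, and then to read off the leading-order normal form via a Lyapunov--Schmidt reduction. I fix $c = c_0 + \epsilon$ with $|\epsilon|$ small. By Assumption \ref{ass-kernel}, the profile $\eta = \eta(c)$ and the pencil $A(c) - \lambda B(c)$ depend smoothly on $c$; moreover, $\ker A(c)$ remains two-dimensional, spanned by $\vec{\phi}_1(c) = (\eta'(c), 0)$ and $\vec{\phi}_2 = (0, 1)$, while $\ker A^*(c)$ is spanned by $\vec{\psi}_1 = (1, 0)$ and $\vec{\psi}_2(c) = (0, \eta'(c))$, since $A^*$ is obtained from $A$ by swapping the self-adjoint blocks $K$ and $\mathcal{L}$.

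I will seek a bifurcating eigenvector in the form
\begin{equation*}
\vec{x} = \vec{x}_0 + \lambda \vec{x}_1 + \lambda^2 \vec{x}_2 + \lambda^3 \vec{x}_3 + O(\lambda^4), \qquad \vec{x}_0 = a_1 \vec{\phi}_1 + a_2 \vec{\phi}_2,
\end{equation*}
and match powers of $\lambda$ in $(A - \lambda B) \vec{x} = 0$ to obtain the chain $A \vec{x}_k = B \vec{x}_{k-1}$ for $k \geq 1$. Each level will be solved modulo its Fredholm obstruction, recorded as a residual $\vec{r}_k \in \ker A^*(c)$ through $A \vec{x}_k = B \vec{x}_{k-1} - \vec{r}_k$. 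The bifurcation equation is then the vanishing of the total residual $\sum_{k \geq 1} \lambda^k \vec{r}_k$, tested against the two basis vectors of $\ker A^*(c)$, producing a $2 \times 2$ linear system in $(a_1, a_2)$ whose determinant yields the normal form.

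Carrying out the chain explicitly through level four, I expect parity (with $\eta$ even) to alternate along the chain and eliminate the level-one and level-three residuals for free. At level two, the $\vec{\psi}_1$-pairing will reproduce the first row of (\ref{lin-alg}) and yield $-a_2 + O(\epsilon)$, while the $\vec{\psi}_2$-pairing will reproduce the second row and yield $a_1 \mathcal{P}'(c) = a_1 \mathcal{P}''(c_0) \epsilon + O(\epsilon^2)$. At level four, using the solutions $(\tilde{v}_3, \tilde{w}_3)$ of (\ref{gen-Bab-eq-3-red}), the $\vec{\psi}_2$-pairing will evaluate as
\begin{equation*}
\langle \vec{\psi}_2, B \vec{x}_3 \rangle = a_1 \bigl( \langle \eta', \tilde{w}_3 \rangle - 2 c \langle K \eta, \tilde{v}_3 \rangle \bigr) + O(\epsilon) = a_1 \mathcal{B} + O(\epsilon),
\end{equation*}
which is precisely the defining formula (\ref{def-D}). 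Collecting the two components of the residual equation then gives
\begin{align*}
\lambda^2 (-a_2) + O(\lambda^2 \epsilon, \lambda^4) &= 0, \\
\lambda^2 a_1 \mathcal{P}'(c) + \lambda^4 a_1 \mathcal{B} + O(\lambda^2 \epsilon^2, \lambda^4 \epsilon, \lambda^6) &= 0.
\end{align*}
The first equation slaves $a_2 = O(\epsilon, \lambda^2)$; the second, after factoring out $\lambda^2 a_1$, becomes the scalar bifurcation equation
\begin{equation*}
\mathcal{P}'(c) + \lambda^2 \mathcal{B} + O(\epsilon^2, \lambda^2 \epsilon, \lambda^4) = 0,
\end{equation*}
with leading balance $\lambda^2 = -\mathcal{P}''(c_0) (c - c_0)/\mathcal{B} + O((c - c_0)^2)$. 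For $c - c_0 > 0$ this gives a real pair $\pm \lambda$ iff $\mathcal{B} \mathcal{P}''(c_0) < 0$ and a purely imaginary pair $\pm i \omega$ iff $\mathcal{B} \mathcal{P}''(c_0) > 0$; the two cases exchange for $c - c_0 < 0$, yielding the statement of the theorem.

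The main obstacle will be the careful bookkeeping at level four: ruling out spurious contributions of size $\lambda^2 \epsilon$ or $\epsilon^2$ in the lower-left entry of the bifurcation matrix that could shift the leading balance. The main tools for controlling these are the parity alternation along the chain, which annihilates most candidate cross-terms between the translation and gauge directions, and the identity $\mathcal{P}'(c_0) = 0$ at the expansion point together with the symmetric structure of $A$, which causes several $\epsilon$-corrections inherited from the $c$-differentiation of the base chain vectors to cancel. A standard justification of the Puiseux expansion as in \cite{welters2011} then turns this formal computation into a genuine analytic statement about the bifurcating eigenvalue branches.
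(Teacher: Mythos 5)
Your proposal is correct and, at its computational core, coincides with the paper's proof: you construct the same Jordan chain $(\tilde v_2,\tilde w_2)$, $(\tilde v_3,\tilde w_3)$ from (\ref{gen-Bab-eq-2-red}) and (\ref{gen-Bab-eq-3-red}), use the same parity argument to kill the odd-level obstructions, and arrive at the same two Fredholm pairings $-a_2$ and $a_1\mathcal{P}'(c)$ at level two and $a_1\mathcal{B}$ at level four (your identification of $\ker A^*={\rm span}\{(1,0),(0,\eta')\}$ and of the pairing $\langle\eta',w\rangle-2c\langle K\eta,v\rangle$ with (\ref{def-D}) both check out). The difference is organizational: you run a Lyapunov--Schmidt reduction with the eigenvector expanded in powers of $\lambda$ and $(\lambda,\epsilon)$ treated as independent, ending with the scalar bifurcation equation $\mathcal{P}'(c)+\lambda^2\mathcal{B}+O(\epsilon^2,\lambda^2\epsilon,\lambda^4)=0$, whereas the paper posits the Puiseux ansatz $\lambda=\sqrt{\epsilon}\,\lambda_1+\dots$ in (\ref{halfseries}) and extracts $\lambda_1^4\mathcal{B}+\lambda_1^2\,{\rm sgn}(c-c_0)\mathcal{P}''(c_0)=0$ at order $\mathcal{O}(\epsilon^2)$; the two are equivalent since $\lambda^2=\epsilon\lambda_1^2$. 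Your formulation buys a cleaner justification (the $2\times 2$ bifurcation matrix is analytic in $\lambda$ and $c$, so the leading balance follows from the implicit function theorem applied to $\lambda^2/\epsilon$, without appealing separately to \cite{welters2011}), while the paper's ansatz makes the $\sqrt{|c-c_0|}$ scaling explicit from the start. One point you should make explicit when writing this up: because ${\rm Ker}(K)={\rm span}(1)$, the level-two solution $\vec x_2$ is only determined up to a multiple $\alpha$ of $(0,1)$, and that free constant must be carried along and fixed by the level-four solvability condition in the \emph{first} component (the $\vec\psi_1$-pairing); the paper does this explicitly with its parameter $\alpha$ in (\ref{gen-Bab-eq-3-mod})--(\ref{gen-Bab-eq-4}) and verifies that $\alpha$ drops out of the second component, so it does not contaminate the coefficient $\mathcal{B}$. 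Your scheme accommodates this automatically, but the verification that $\alpha$ does not enter the $\vec\psi_2$-pairing at level four is a step, not a triviality.
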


\begin{proof}
Since $\mathcal{P}'(c_0) = 0$ and $c \in (c_0 - \epsilon_0,c_0+\epsilon_0)$ for small $\epsilon_0 > 0$, we expand $\mathcal{P}'(c) = \mathcal{P}''(c_0) (c-c_0) + \mathcal{O}((c-c_0)^2)$ and assume that $\mathcal{P}''(c_0) \neq 0$. Let $\epsilon := |c - c_0| \in (0,\epsilon_0)$ be a small parameter. Solutions to the spectral stability problem (\ref{spec-Bab-eq}) are found by Puiseux expansion for the multiple zero eigenvalue:
\begin{equation}
    \left\{ \begin{array}{l} 
    v = \eta' + \sqrt{\epsilon} v_1 + \epsilon v_2 + \epsilon \sqrt{\epsilon} v_3 + \epsilon^2 v_4 + \mathcal{O}(\epsilon^2 \sqrt{\epsilon}), \\
    w = 0 + \sqrt{\epsilon} w_1 + \epsilon w_2 + \epsilon \sqrt{\epsilon} w_3 + \epsilon^2 w_4 + \mathcal{O}(\epsilon^2 \sqrt{\epsilon}),\\
    \lambda = 0 + \sqrt{\epsilon} \lambda_1 + \epsilon \lambda_2 + \epsilon \sqrt{\epsilon} \lambda_3 + \epsilon^2 \lambda_4 + \mathcal{O}(\epsilon^2 \sqrt{\epsilon}), \end{array} \right. \label{halfseries}
\end{equation}
where all correction terms are to be found recursively. Since the admissible values of $\lambda_1$ are found at the order of $\mathcal{O}(\epsilon^4)$ and the admissible values of $\lambda_2$, $\lambda_3$, etc are found at the higher orders, we will not write any correction terms related to $\lambda_2$, $\lambda_3$, etc. They are identical to the correction terms related to $\lambda_1$.

At the order of $\mathcal{O}(\sqrt{\epsilon})$, we obtain the linear inhomogeneous system (\ref{gen-Bab-eq}) with $a_1 = \lambda_1$ and $a_2 = 0$, hence the solution is 
$$
v_1 = -\lambda_1 \partial_c \eta, \qquad w_1 = \lambda_1 \mathcal{H} \eta, 
$$
in agreement with (\ref{first-chain}).

At the order of $\mathcal{O}(\epsilon)$, we obtain the linear inhomogeneous system (\ref{gen-Bab-eq-2}) with $a_1 = \lambda_1^2$ and $a_2 = 0$. Recall that the solution of (\ref{gen-Bab-eq-2}) exists if and only if $\mathcal{P}'(c) = 0$, which is not the case if $c \neq c_0$ due to $\epsilon \neq 0$. Therefore, we represent the solution of (\ref{gen-Bab-eq-2}) in the form
$$
v_2 = \lambda_1^2 \tilde{v}_2, \qquad w_2 = \lambda_1^2 (\tilde{w}_2 + \alpha), 
$$
where $(\tilde{v}_2,\tilde{w}_2)$ is a solution of the linear inhomogeneous system (\ref{gen-Bab-eq-2-red}) uniquely defined under the orthogonality conditions (\ref{orth-2-red}) and $\alpha \in \mathbb{R}$ is a parameter to be determined from the orthogonality condition at the order of $\mathcal{O}(\epsilon^2)$ due to ${\rm Ker}(K) = {\rm span}(1)$.

\begin{remark}
    When $(\tilde{v}_2,\tilde{w}_2)$ is obtained from the system (\ref{gen-Bab-eq-2-red}), we use the additional term $-\frac{\mathcal{P}'(c)}{\| \eta' \|^2} \eta'$ in the system (\ref{gen-Bab-eq-2-red}) required by the Fredholm theorem. This term of order $\mathcal{O}(\epsilon)$ is compensated for by the term $+\frac{\mathcal{P}'(c)}{\epsilon \| \eta' \|^2} \eta'$ of order $\mathcal{O}(\epsilon^2)$, see (\ref{gen-Bab-eq-4} below. This is consistent with the assumption $\mathcal{P}''(c_0) \neq 0$, which ensures that 
    $\mathcal{P}'(c) = \mathcal{O}(\epsilon)$.
\end{remark}

At the order of $\mathcal{O}(\epsilon \sqrt{\epsilon})$, we obtain the linear inhomogeneous system, 
\begin{equation}
\left\{ \begin{array}{ll}
K w_3 &=  \lambda_1^3 \mathcal{M} \tilde{v}_2, \\
\mathcal{L} v_3 &= \lambda_1^3 (- 2 c \mathcal{H} \tilde{v}_2 + \mathcal{M}^* \tilde{w}_2 + \alpha \mathcal{M}^* 1).
\end{array}
\right. 
\label{gen-Bab-eq-3-mod}
\end{equation}	
which can be compared with (\ref{gen-Bab-eq}) and (\ref{gen-Bab-eq-3}). The solution exists in the form
$$
v_3 = \lambda_1^3 (\tilde{v}_2 - \alpha), \qquad w_3 = \lambda_1^3 \tilde{w}_3, 
$$
where $(\tilde{v}_3,\tilde{w}_3)$ is a solution of the linear inhomogeneous equation (\ref{gen-Bab-eq-3-red}) uniquely defined under orthogonality conditions (\ref{orth-3-red}).

At the order of $\mathcal{O}(\epsilon^2)$, we obtain the linear inhomogeneous system, 
\begin{equation}
\left\{ \begin{array}{ll}
K w_4 &=  \lambda_1^4 (\mathcal{M} \tilde{v}_3 - \alpha \mathcal{M} 1), \\
\mathcal{L} v_4 &= \lambda_1^4 (- 2 c \mathcal{H} \tilde{v}_3 + \mathcal{M}^* \tilde{w}_3) + \lambda_1^2 \epsilon^{-1} \frac{\mathcal{P}'(c)}{\|\eta'\|^2} \eta',
\end{array}
\right. 
\label{gen-Bab-eq-4}
\end{equation}	
where the projection term came from the order $\mathcal{O}(\epsilon)$ in the linear inhomogeneous system (\ref{gen-Bab-eq-2-red}). Since ${\rm Ker}(K) = {\rm span}(1)$, $\alpha \in \mathbb{R}$ is uniquely found from 
the existence of the solution $w_4 \in H^1_{\rm per}(\mathbb{T})$ by the Fredholm theorem:
$$
\alpha = \frac{\langle 1, \mathcal{M} \tilde{v}_3 \rangle}{\langle 1, \mathcal{M} 1 \rangle} = 
\langle (1 + 2 K \eta), \tilde{v}_3 \rangle,
$$
where we have used $\langle 1, \mathcal{M} 1 \rangle = \langle (1+2K\eta), 1 \rangle = 1$. 
Although $\alpha$ is uniquely defined from the first equation of the system (\ref{gen-Bab-eq-4}), 
it does not contribute to the second equation of the system (\ref{gen-Bab-eq-4}) and therefore does not change the normal form. Since ${\rm Ker}(\mathcal{L}) = {\rm span}(\eta')$, a solution $v_4 \in H^1_{\rm per}(\mathbb{T})$ exists if and only if 
$$
\lambda_1^4 \langle \eta', (- 2 c \mathcal{H} \tilde{v}_3 + \mathcal{M}^* \tilde{w}_3) \rangle + \lambda_1^2 \epsilon^{-1} \mathcal{P}'(c) = 0, 
$$
which can be rewritten at the leading order of $\mathcal{P}'(c) = \mathcal{P}''(c_0) (c-c_0) + \mathcal{O}(\epsilon^2)$ as 
\begin{equation}
\label{normal-form-eq}
\lambda_1^4 \mathcal{B} + \lambda_1^2  {\rm sgn}(c-c_0) \mathcal{P}''(c_0) = 0, 
\end{equation}
where $\mathcal{B}$ is given by (\ref{def-D}). A nonzero solution for $\lambda_1$ exists if $\mathcal{P}''(c_0) \neq 0$ such that for ${\rm sgn}(c-c_0) = +1$, that is, for $c \in (c_0,c_0+\epsilon_0)$, we have $\lambda_1^2 > 0$ if $\mathcal{B} \mathcal{P}''(c_0) < 0$ and $\lambda_1^2 < 0$ if $\mathcal{B} \mathcal{P}''(c_0) > 0$. 
The sign of $\lambda_1^2$ is exchanged to the opposite if ${\rm sgn}(c-c_0) = -1$, that is, if $c \in (c_0-\epsilon_0,c_0)$. This concludes the proof. 
\end{proof}

\begin{remark}
As Figure \ref{fig:hamiltonian} shows, the periodic wave with the even profile $\eta \in C^{\infty}_{\rm per}(\mathbb{T})$ is continued numerically with respect to the steepness parameter $s$. Based on the numerical observations in Figure \ref{fig:hamiltonian}, there is exactly one fold point between each extremal point of $\mathcal{P}(c)$ or, equivalently, $\mathcal{H}(c)$, so that $\frac{dc}{ds}$ alternates sign at each point, where 
$\mathcal{P}'(c_0) = 0$. Similarly, the sign of $\mathcal{P}''(c_0)$ alternates between the extremal points. Since we show in Section \ref{sec-5} that the value of $\mathcal{B}$ has the same sign for each instability bifurcation, the new pair of real (unstable) eigenvalues $\lambda$ bifurcates in the direction of increasing steepness $s$ at each extremal point of $\mathcal{P}(c)$.
\end{remark}

\section{Numerical approximations}
\label{sec-5}

We are considering here Stokes waves with large steepness, which are beyond the applicability limit of the small-amplitude expansions~\cite{levi1925determination}. Table~\ref{tab:wave_params} gives the first and second critical points of the Hamiltonian $H$ at steepness $s_1$ and $s_2$, which are also seen in Figure \ref{fig:hamiltonian}. It is then necessary to use other approximations of Stokes waves with large steepness before the stability problem can be studied. There are two challenging problems that have to be treated numerically, the first one is obtaining a solution of the Babenko equation~\eqref{trav-Bab-eq} with high accuracy, and the second one is to find the eigenvalues of the stability problem~\eqref{spec-Bab-eq}. Once the eigenvalues are found, we can compare them with the Puiseux expansion in~\eqref{halfseries} to cross-validate numerics and theory.

\begin{table}[htb!]
    \centering
    \renewcommand{\arraystretch}{1.3}
    \begin{tabular}{c|c|c|c|c}
        $s$ & $c$ & $\mathcal H$ & $\mathcal P$ & $\mathcal{B} $\\ \hline
         $0.13660354990$ & $1.0921379$ & $0.46517718146$ & $0.44729319629$ &  $11.01822$\\ 
        $0.14079654715$ & $1.0922868$ & $0.45770578965$ & $0.44045605242$ & $10.96232$\\ 
    \end{tabular}
    \caption{Parameters of Stokes waves at the first two extrema of energy $\mathcal{H}$ or, equivalently, the horizontal momentum $\mathcal{P}$ with the coefficient $\mathcal{B}$ of the normal form (\ref{def-D}). }
    \label{tab:wave_params}
\end{table}

\subsection{The Babenko equation}

We adopt the strategy from~\cite{dyachenko_semenova2022,dyachenko2022almost} to find Stokes waves numerically. The entire branch of Stokes waves is found by continuation method with respect to the speed parameter $c$. Given a Stokes wave $\eta^{(0)}(u)$ with speed $c^{(0)}$ that solves $\mathcal{S}(c^{(0)},\eta^{(0)}) = 0$, where $\mathcal{S}$ denotes the nonlinear Babenko equation~\eqref{trav-Bab-eq}, we apply the Newton's method to find a new solution $(c^{(1)},\eta^{(1)})$. The initial approximation to Stokes wave with $c^{(1)}$ is chosen to be $\eta^{(0)} + \delta \eta$ from the expansion
\begin{align*}
   0 = \mathcal{S}(c^{(1)}, \eta^{(0)} + \delta \eta) = \mathcal{S}(c^{(1)}, \eta^{(0)}) + \mathcal L(c^{(1)},\eta^{(0)})\,\delta \eta + \ldots,
\end{align*}
where the neglected terms are quadratic and higher order functions of $\delta \eta$ and $\mathcal{L}(c^{(1)},\eta^{(0)})$ is the linearized Babenko operator computed at the profile $\eta^{(0)}$ for the speed $c^{(1)}$. Once the nonlinear terms in $\delta \eta$ are neglected, the approximate equation for $\delta \eta$ yields
\begin{align}
    \mathcal{L}(c^{(1)},\eta^{(0)}) \delta\eta = - \mathcal{S}(c^{(1)},\eta^{(0)}), \label{fminres}
\end{align}
which is solved in the Fourier space by means of the minimum residual method~\cite{saad1992numerical} (MINRES). The linearized Babenko's operator $\mathcal L$ is self-adjoint in $L^2(\mathbb{T})$, but it is not positive definite. This makes MINRES the preferred method of computing solutions for the correction term (\ref{fminres}). Once the linear system in~\eqref{fminres} is solved, the approximate solution is updated via $\eta^{(0)} \to \eta^{(0)} +\delta \eta$, and the new $\delta \eta$ is found from (\ref{fminres}) with updated $\eta^{(0)}$. 
This algorithm is repeated until a convergence criterion $\|S(c^{(1)},\eta^{(0)} + \delta \eta)\|_{L^2} \leq 10^{-28}$ is reached, at which step we assign $\eta^{(1)} = \eta^{(0)} + \delta \eta$ for this value of $c^{(1)}$. For variable precision arithmetic, the GNU MPFR~\cite{fousse2007mpfr} and GNU MPC~\cite{mpc} libraries are used, and the fast Fourier Transform (FFT) C library is written based on~\cite{press2007numerical}.
The convergence rate of Fourier series is improved by means of auxiliary conformal mapping based on Jacobi elliptic function, see ~\cite{hale2009conformal} and applications of this method in ~\cite{dyachenko2022almost}.

\begin{figure}[htb!]
	\centering
	\includegraphics[width=0.425\linewidth]{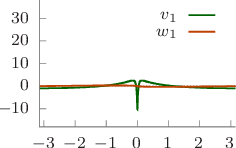}
    \includegraphics[width=0.425\linewidth]{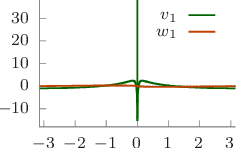}
	\includegraphics[width=0.425\linewidth]{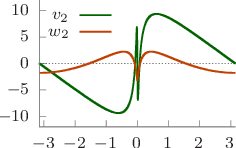}
    \includegraphics[width=0.425\linewidth]{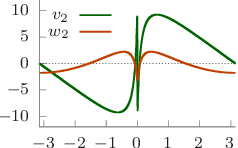}
    \includegraphics[width=0.425\linewidth]{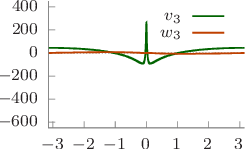}
    \includegraphics[width=0.425\linewidth]{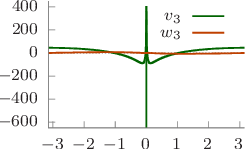}
	\caption{The generalized eigenvectors $(v_1,w_1)$, $(v_2,w_2)$ and $(v_3,w_3)$ defined via equations~\eqref{gen-Bab-eq} with $a_1 = 1$, $a_2 = 0$, \eqref{gen-Bab-eq-2-red} and \eqref{gen-Bab-eq-3} (top to bottom) for the first two critical points of the Hamiltonian at $s_1 = 0.13660354990$ (left) and $s_2 = 0.14079654715$ (right).}
	\label{fig:gen_vectors}
\end{figure}

\begin{figure}[htb!]
    \centering
    \includegraphics[width=0.495\linewidth]{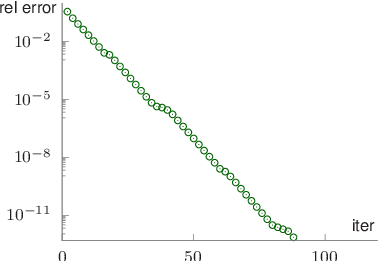}
    \includegraphics[width=0.495\linewidth]{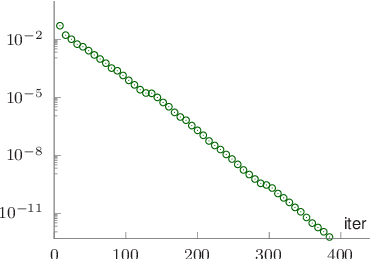}
    \caption{An example of numerical convergence of the iterative method for $(v_2,w_2)$ for the Stokes waves at $s_1 = 0.13660355$ (left) and $s_2 = 0.1407965471$ (right). The relative $L^2$ norm of the residual is shown versus the iteration number.  }
    \label{fig:numconv}
\end{figure}

\subsection{Generalized eigenfunctions}

We are construction a sequence of (generalized) eigenfunctions in the proof of Theorem \ref{theorem-crit}.  The algorithm is written in double precision arithmetic. The key element in determining generalized eigenfunctions is in solving linear inhomogeneous systems with the preconditioned MINRES, for which a symmetric positive definite preconditioner is defined by the strictly positive operator $(1 + c^2 K)$ to improve convergence rate. 

Figure~\ref{fig:gen_vectors} shows the three generalized eigenfunctions of the Jordan chains 
$(v_1,w_1)$, $(v_2,w_2)$, and $(v_3,w_3)$ defined via the linear inhomogeneous systems~\eqref{gen-Bab-eq} with $a = 1$, $a_2 = 0$, \eqref{gen-Bab-eq-2-red}, and~\eqref{gen-Bab-eq-3} for the Stokes wave at the first two extrema $s_1$ and $s_2$ of the Hamiltonian (see also Table~\ref{tab:wave_params}).

Figure~\ref{fig:numconv} illustrates the convergence rate of the preconditioned MINRES for the system~\eqref{gen-Bab-eq-2-red} to find the eigenfunction $(v_2,w_2)$ for Stokes waves at $s_1$ and $s_2$. The number of Fourier modes to represent the Stokes wave and the (generalized) eigenfunctions on a uniform grid is $N = 8192$ at $s_1$, and $N = 262144$ for $s_2$.

\begin{figure}[htb!]
	\centering
	\includegraphics[scale=1.25]{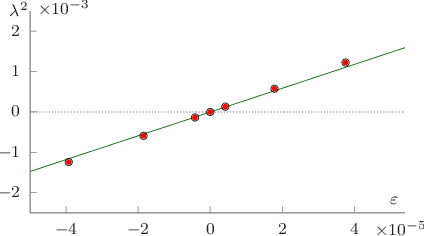}
    \includegraphics[scale=1.25]{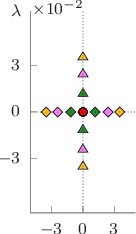}
    \includegraphics[scale=1.25]{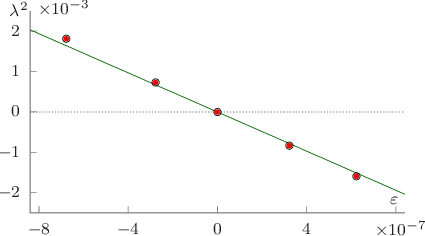}
    \includegraphics[scale=1.25]{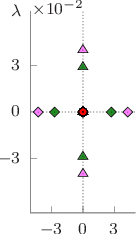}
    \caption{Top left shows $\lambda^2(\varepsilon)$ obtained from numerical solution of the stability problem~\eqref{spec-Bab-eq} (red dots), and evaluating the expansion~\eqref{halfseries} (green line) with $\lambda_1^2 = 29.4871$ ($\mathcal{B} = 11.01822$); and top right shows a pair of real eigenvalues appearing from a collision of two imaginary eigenvalues at $s_{1}$ described in~\eqref{halfseries} with $c-c_1 = \varepsilon=     -3.93\times10^{-5}$, $\varepsilon = -1.85\times10^{-5}$, $\varepsilon = -4.23\times 10^{-6}$ (gold, orchid and green triangles respectively), $\varepsilon=0$ (red circle), $\varepsilon = 4.17\times10^{-6}$, $\varepsilon = 1.78\times10^{-5}$ and $\varepsilon=3.75\times10^{-5}$ (green, orchid and gold diamonds respecively). Bottom row shows the same quantities at the second extremum at $s_2$ with $\mathcal{B} = 10.96232$.}
	\label{fig:det-s}
\end{figure}

\subsection{Finding eigenvalues of the stability problem} 

Eigenvalues of the stability problem can be found from numerically solving the eigenvalue problem~\eqref{spec-Bab-eq}, or equivalently, the quadratic pencil problem
\begin{align*}
    \left[\mathcal{M}^*K^{-1}\mathcal{M} \lambda^2 - 2c\mathcal{H}\lambda -\mathcal{L} \right] v = 0, 
\end{align*}
where $K^{-1}$ is defined under the orthogonality condition 
$\langle (1+2K\eta),v \rangle = 0$, see (\ref{orth-conditions}).
The quadratic pencil problem was used in~\cite{dyachenko_semenova2022} for co-periodic perturbations and in~\cite{dyachenko2023quasiperiodic,DDS2024} for subharmonic perturbations via the Bloch-Floquet theory (see also~\cite{deconinck2006computing} for the numerical Hill method). A new, $j^{th}$ pair of eigenvalues collide at each extrema of the Hamiltonian, $s_j$. The collision occurs at the origin in the spectral plane, and the eigenvalues become real as shown in Figure~\ref{fig:det-s} (right panels). It is convenient to show the square of eigenvalue $\lambda^2(\varepsilon)$ as a function of $\varepsilon=c-c_0$ and compare it to the expression~\eqref{normal-form-eq} to cross validate theory and numerics in Fig.~\ref{fig:det-s} (left panels). The direct computation of eigenvalues is obtained via the shift-and-invert method. 

It is interesting to note that the values of the coefficient $\mathcal{B}$ in the normal form (\ref{normal-form-eq}) are surprisingly close at both extrema of the Hamiltonian, see Table \ref{tab:wave_params}. The coefficient $\mathcal{B}$ is uniquely defined in the Puiseux expansion (\ref{halfseries}). A further investigation is needed to check if this behaviour is universal for all extremal points of the Hamiltonian.

\section{Conclusion}
\label{sec-6}

We summarize the main outcome of this work. We have used conformal variables for the two-dimensional Euler's equation in an infinitely deep fluid and computed the normal form for the zero eigenvalue bifurcaton 
of the Stokes waves with respect to co-periodic perturbations. The zero eigenvalue bifurcation occurs 
at every extremal point of the Hamiltonian or, equivalently, the horizontal momentum. The coefficient of the normal form computed numerically shows that the new unstable eigenvalues emerge in the direction of the 
increasing steepness of the Stokes wave. 

This work opens the road to analytic understanding of bifurcations of the unstable spectral bands in the modulational instability of Stokes waves by using the Bloch--Floquet theory. Numerical results have been computed recently in \cite{deconinck2022instability,DDS2024} and show interesting transformations of the spectral bands 
when the real unstable eigenvalues bifurcate in the space of anti-periodic and co-periodic perturbations. The figure-$8$ instability is replaced by the figure-$\infty$ instability at the co-periodic instability bifurcation, 
and this transformation is described by the normal form which extend the normal form derived here by the parameter of the Bloch--Floquet theory along the spectral bands. The analytical proof of this transformation is currently in progress. The recent work \cite{CuiP2025} describes a similar transformation of the figure-$8$ instability to the figure-$\infty$ instability in the local model of the focusing modified Korteweg--de Vries equation obtained via integrability of the model.  

\vspace{0.2cm}

{\bf Acknowledgement.} A part of this work was done while the second author attended the INI program ``Emergent phenomena in nonlinear dispersive waves" in Newcastle, UK (July--August, 2025). 

\bibliographystyle{amsplainabbrv}
\bibliography{main}

\end{document}